\def\oo{\mathrel { \circ }}
\def\ooo#1{\mathrel { {}_{(#1)}}}
\def\Cur{\mathop {\fam0 Cur}\nolimits}
\def\gc{\mathop {\fam0 gc}\nolimits}
\def\Cend{\mathop {\fam0 Cend}\nolimits}
\def\Cendf{\mathop {\fam0 Cend_{fin}}\nolimits}
\def\End{\mathop {\fam0 End}\nolimits}
\def\id{\mathop {\fam0 id}\nolimits}
\def\ad{\mathop {\fam0 ad}\nolimits}
\def\max{\mathrm{max}}
\def\Pois{\mathrm{Pois}}
\theoremstyle{plain}% Theorem-like structures provided by amsthm.sty
\newtheorem{theorem}{Theorem}[section]
\newtheorem{lemma}[theorem]{Lemma}
\newtheorem{corollary}[theorem]{Corollary}
\theoremstyle{definition}
\newtheorem{definition}[theorem]{Definition}
\newtheorem{example}[theorem]{Example}
\theoremstyle{remark}
\newtheorem{remark}{Remark}
\begin{document}

\title{On universal conformal envelopes for quadratic 
conformal algebras}

\author{
\name{R.~A. Kozlov\textsuperscript{a}\textsuperscript{b}\thanks{CONTACT R.~A. Kozlov. Email: KozlovRA.NSU@yandex.ru. 
The work is supported by Mathematical Center in Akademgorodok, grant №075-15-2022-282.}}
\affil{\textsuperscript{a}Sobolev Institute of Mathematics, Novosibirsk, Russian
Federation}
\affil{\textsuperscript{b}Novosibirsk State University, Novosibirsk, Russian
Federation}
}

\maketitle

\begin{abstract}
We prove that every quadratic Lie conformal algebra constructed on a 
special Gel’fand--Dorfman algebra embeds into
the universal enveloping associative conformal 
algebra
with a locality function bound $N = 3$.
\end{abstract}

\begin{keywords}
Conformal Lie algebras, associative conformal envelopes, Gelfand-Dorfman 
algebras, Poisson envelopes
\end{keywords}

\section{Introduction}

Conformal Lie algebras were introduced in \cite{KacVA} but the idea was around
for somewhat fifteen years, since quite related essences, vertex algebras, appeared as a
description of the operator product expansion (OPE) of chiral fields in the 2-dimensional
conformal field theory \cite{BPZ}. In particular, conformal Lie algebras originally
described the most intriguing, singular part of the OPE. Later Borcherds 
\cite{Borch} formulated
the axiomatic notion of a vertex algebra, with different motivation though. 

From the more algebraic point of view it turns out that a 
vertex algebra naturally 
carries a conformal Lie algebra
structure, perfectly matching with the case of OPE. 
More closely
this connection was investigated in \cite{Ro1999}, \cite{Ro2000}. 
In light of this, conformal Lie algebras
are indeed a powerful tool for studying vertex algebras and related matters.

In this paper we stress our attention on quadratic conformal Lie algebras, 
a wide subclass, including many interesting representatives such as 
current conformal algebras, 
Virasoro, Neveu--Schwartz, Heisenberg--Virasoro, Virasoro--Schr\"odinger, etc. The main feature of quadratic conformal Lie
algebras
is that they are in one-to-one correspondence with the class of 
so called Gel'fand--Dorfman algebras (GD-algebras).
For specifics see \cite{Xu2000}, \cite{Xu2003} (also \cite{HongWu}, \cite{GD}).

First appeared as an auxiliary tool, later, however, conformal algebras turn into independent area of study. The latter is motivated by the fact that a conformal algebra (over a field $\Bbbk $, $\mathrm{char}\,\Bbbk=0$)
is just an
algebra
in the appropriate (pseudo-tensor) category $\mathcal{M}^*(\Bbbk [\partial])$ of
modules over the polynomial algebra $\Bbbk [\partial] $ 
in one variable \cite{BDK}.
Within the framework of this approach, it is possible to expand the basic concepts, for example, what is an algebra, homomorphism, ideal, representation, module, cohomology.
Therefore, the notion of a conformal algebra is the natural expansion of the notion of a “classical” algebra over a field to the particular pseudotensor category.

While Lie conformal algebras are related to vertex algebras, the other varieties
of conformal algebras somehow illuminate the structure 
of infinite-dimensional
algebras in the corresponding “classical” variety.

In the case of ordinary algebras, there is the well-known way 
of turning an associative algebra into a 
Lie one and, vice versa, of embedding a Lie algebra into its universal  enveloping associative algebra. 
Though the first construction could be easily generalized on the conformal level, the latter, in contrary, proves to be way more complicated \cite{Ro2000}, due to some features of the
multiplication. 
This is still an open problem whether every finite (i.e., finitely
generated as a $\Bbbk [\partial]$-module) 
Lie conformal algebra embeds into an
associative conformal algebra with respect to the conformal commutator. 
Even for the
class of quadratic conformal algebras it remains unknown in general
if every such Lie conformal algebra embeds into an appropriate associative one.

A routine way to solve this kind of problem is to construct the universal enveloping. 
In general, such an algebra is defined by generators and relations. 
For a Lie conformal
algebra $C$, there exists a lattice of universal associative enveloping conformal algebras $\mathcal U(C;N)$,
each related to an (associative) locality bound $N$ on the generators \cite{Ro2000}. 
In order to prove (or disprove) the embedding of a Lie conformal algebra into its
universal associative enveloping conformal algebra one needs to know the
normal form of elements in the last one.

A general and powerful method for finding normal forms 
in an algebra defined by
generators and relations is to calculate its standard (or Gr\"obner--Shirshov) basis (GSB) of
defining relations. The idea goes back to Newmann’s Diamond Lemma \cite{New53}, 
see also \cite{Berg1}, \cite{Bok1976}. 
For associative conformal algebras it was initially invented
in \cite{BFK}, later developed in \cite{Kol2020}, \cite{NC} and 
\cite{KK2022}.

Though sometimes this approach is related with technical difficulties, 
due to the rules in a GSB might be highly dependent on the multiplication table of a respective conformal algebra.
This is the case when a quadratic conformal algebra has a rich Novikov structure.
Inspired by \cite{Kol2020IJAC}, \cite{KKP}, in this paper we present an alternative
route to verify that a quadratic conformal algebra with an additional condition embeds into the universal associative envelope related to a
locality function with an upper bound $N = 3$. 

There are several reasons why we consider locality bound $N = 3$, that could be put in
brief as follows: this is the lowest locality that fully reveal all the plethora of
conformal algebras. Indeed, given a Lie algebra $L$, the structure of 
$\mathcal U(\Cur{L}; N)$
for $N = 1$ is pretty much determined by $\Cur{U(L)}$. 
Moreover, even if $L$ is simple and
finite-dimensional, there is a nontrivial central extension of $\Cur{L}$,
namely, the Kac--Moody conformal algebra $K(L)$. Whereas $\mathcal U(\Cur{L}; N)$
has nontrivial central
extension for $N = 2, 3$, namely the universal enveloping of $K(L)$ (see 
\cite{KK2022}). 

Quite similar result was shown in \cite{AK}, the universal enveloping of
the Virasoro conformal algebra has nontrivial second Hochschild cohomology group
(equivalently, nontrivial central extensions) for locality $N = 3$, but not for 
$N = 2$ \cite{Koz2017}.

This paper is organised as follows. In section 2 we give a brief introduction into Novikov and Gel'fand--Dorfman algebras. In section 3.1 the sufficient 
foundation on the theory of Lie and associative conformal algebras is given. 
Some crucial remarks are presented. In section 3.2 we explicitly introduce a conformal
algebra that is the key to the goal of this paper.

In section 4 the main part of this work is presented. 
Section 4.1 is devoted to a
technique that allows us to get through some obstacles (the absence of the PBW property) 
in constructing of the universal enveloping associative conformal 
algebra for a quadratic
conformal algebra with a nontrivial Novikov structure. 
The main statement is formulated.

Section 4.2 is filled with examples that reveal initially hidden feature
of the presented construction: sometimes we are even able to obtain 
an explicit presentation 
of the universal enveloping conformal algebra itself.
Note that for the Virasoro conformal algebra $\mathrm{Vir}$, its 
universal associative envelope for $N=2$ is known as the Weyl conformal algebra \cite{Ro1999}. The universal envelope corresponding to $N=3$ did not have a 
similar nice presentation. However, it turns out that our construction 
being applied to the Virasoro conformal algebra produces 
the universal associative conformal envelope of $\mathrm {Vir}$
corresponding to the locality bound $N=3$.
This is not a general fact: for the abelian Lie conformal algebra of rank one 
our construction produces an envelope which is not universal.

\section{Preliminaries on GD algebras}

\begin{definition}
A vector space $V$ equipped with a bilinear operation $(\cdot\oo\cdot)$: $V\otimes V 
\to V$ is said to be a \textbf{Novikov algebra} if the operation is left-symmetric and
right-commutative:
$$
(a \circ b) \circ c - a \circ (b \circ c) = (b \circ a) \circ c - b \circ (a \circ c),
$$
$$
(a \circ b) \circ c = (a \circ c) \circ b,
$$
for every $a, b, c \in V$.
\end{definition}

\begin{definition}
A vector space $V$ with two bilinear operations $(\cdot\oo\cdot)$, $[\cdot~,~\cdot]$:
$V\otimes V \to V$ is called a \textbf{Gel'fand--Dorfman algebra} (or GD algebra, for short) if it is a Novikov
algebra with respect to $(\cdot\oo\cdot)$, a Lie algebra with respect to
$[\cdot~,~\cdot]$, and there is an additional compatibility condition:
\begin{equation}\label{GD}
[a, b \oo c] - [c, b \oo a] - b \oo [a, c] + [b, a] \oo c  - [b, c] \oo a = 0,    
\end{equation}
for every $a, b, c \in V$.
\end{definition}

Originally introduced in \cite{GD}, both these classes of algebras are used for constructing
Hamiltonian operators in the formal calculus of variations. An abundant source of GD 
algebras are differential Poisson algebras. Recall, that the Poisson algebra is an
associative and commutative algebra along with a Lie bracket $\{\cdot~,~\cdot\}$
subjected to the Leibniz rule:
$$
\{a, b\cdot c\} = b\cdot \{a, c\} + \{a, b\}\cdot c, \quad \forall a, b, c \in P.
$$
Given a Poisson algebra $P$ with a derivation $d$, one can
turn it into a GD
algebra assigning $a\cdot d(b) = a \oo b$ and $\{a, b\} = [a, b]$. It is 
straightforward to check that \eqref{GD} is fulfilled. Denote the resulting GD algebra
as $P^{(d)}$.

\begin{definition}
A GD algebra $V$ is called special if there exists a Poisson algebra $P$
with a derivation $d$, such that there is the injective inclusion $V \subseteq P^{(d)}$.
\end{definition}

Respective $P$ is called a Poisson enveloping for $V$. Those GD algebras that 
are not special are called $\textbf{exceptional}$. 
Examples of exceptional GD algebras may be found in \cite{KBO}, \cite{KKP}.

\begin{example} \label{ex1}
The most natural example of a special GD algebra, that is not originally a Poisson
algebra, is the following one: consider a Novikov algebra $V$ and enrich it with a Lie 
structure by the rule $[a, b] = a \oo b - b \oo a$, for all $a, b \in V$. Denote it as
$V^{(-)}$. Then one
obtain a GD algebra (see \cite{GD}), that is special (\cite{KKP}, a general
construction was presented, see theorems 1, 2 and lemmas 1-3). 

One can construct universal differential enveloping $U_d(V)$ 
of an arbitrary Novikov
algebra $V$ (this enveloping exists due to \cite{BokCZ17}) and then endow it
with a suitable Poisson bracket. Broadly sketching, one should fix
a linear basis $X$ of $V$ along with all formal derivatives 
$d^{\omega}X=\{a^{(n)} \mid a\in X,\, n\in \mathbb Z_+ \}$,
where $a^{(n)}$ stands for $d^n(a)$,
then $U_d(V) $ is the free associative commutative
algebra generated by $d^{\omega } X$ relative to the 
defining relations obtained by all formal derivatives of
\begin{equation*}
a \oo b = a d(b), \quad a, b \in X, 
\end{equation*}
where $a \oo b $ is treated as a linear combination of elements from $X$.
Then we define a bracket:
\begin{equation*}
\{a^{(n)}, b^{(m)}\} = (m-1) a^{(n+1)} b^{(m)} - (n-1) a^{(n)} b^{(m+1)}, \quad
a, b \in X, \quad n,m \ge 0.
\end{equation*}

This bracket proves to be well-defined, compatible with the Novikov product, and
represents the bracket from $V^{(-)}$.
Hence we can take $U_d(V)$ as as a differential Poisson envelope 
of $V^{(-)}$.
\end{example}

\section{Conformal algebras and conformal endomorphisms}

\subsection{Lie conformal algebras and enveloping}

In this section we recall the main definitions concerning Lie and associative conformal
algebras and their relations to GD algebras. We will not dig deep into origins and
structure results, see for specifics on concerning topics \cite{BDK}, \cite{BKL2003}, \cite{DK}---\cite{FKR}, \cite{Kol2011}---\cite{Kol2016} and \cite{Ro1999}---\cite{Ro2000}. 

Hereinafter let $H$ = $\Bbbk [\partial]$ be a ring of polynomials in one
indeterminate and the field $\Bbbk $ be of characteristic zero.

\begin{definition}
An $H$-module $C$ endowed with a linear operation $[\cdot \ooo{\lambda} \cdot]$:
$C \otimes C \to C[\lambda]$ is called \textbf{Lie conformal algebra} if the following
list of conditions holds:
\begin{equation*}
    [\partial a \ooo{\lambda} b] = - \lambda [a \ooo{\lambda} b], \quad
    [a \ooo{\lambda} \partial b] = (\partial + \lambda) [a \ooo{\lambda} b], \quad
    \text{(sesqui-linearity)}
\end{equation*}
\begin{equation*}
    [a \ooo{\lambda} b] = - [b \ooo{- \partial - \lambda} a], \quad 
    \text{(skew-symmetry)}
\end{equation*}
\begin{equation*}
    [[a \ooo{\lambda} b] \ooo{\lambda + \mu} c] = [a \ooo{\lambda} [b \ooo{\mu} c]] - 
    [b \ooo{\mu} [a \ooo{\lambda}  c], \quad \text{(Jacobi identity)}
\end{equation*}
for every $a, b, c \in C$.
Here we assume that 
$$
a \ooo{\lambda} b = \sum_{n \ge 0} \frac{\lambda^n}{n!} (a \ooo{n} b), \quad
\forall a, b \in C.
$$
The coefficients $(a \ooo{n} b)$ are called the $n$-products of $a $ and $b$.
\end{definition}

In this paper we are particularly interested in a quite wide class of conformal
algebras called \textbf{quadratic conformal algebras}.

\begin{definition}
A Lie conformal algebra $C$ is called a quadratic one if it is free $H$-module, i.e.
$C = H \otimes V$, and its underlying vector space $V$ is 
equipped with three bilinear operations $c_i(a, b)$ : $V \times V \to V$, $i = 0, 1, 2$,
such that
$$
[a \ooo{\lambda} b] = c_0(a,b) + \partial c_1(a,b) + \lambda c_2(a,b),
$$
for every $a, b \in V$.
\end{definition}

It was shown in \cite{Xu2000} that the skew-symmetry and Jacobi identity are equivalent to
the presentation 
$$ c_0(a,b) = [a,b], \quad c_1(a,b)=b\circ a, \quad c_2(a,b)=a\circ b+b\circ a, $$
where $(V;~(\cdot \oo \cdot),~[\cdot, \cdot])$ is a GD algebra. Denote the respective
quadratic conformal algebra as $L(V)$.

The most natural examples, and the most interesting relative to the cause of this paper,
are the following:

\begin{example}
Let $(C;~[\cdot,\cdot])$  be an ordinary Lie algebra. Consider the free $H$-module 
$\Cur C = H\otimes C$ with the bracket
\[
[(f(\partial)\otimes a)\ooo{\lambda} (g(\partial)\otimes b)] = f(-\lambda )g(\partial+\lambda )\otimes [a, b]
\]
Clearly, it is a Lie conformal algebra called {\bf current conformal algebra}. 
This is a particular case of a quadratic conformal algebra 
corresponding to the GD algebra obtained from the given Lie algebra
with trivial Novikov structure.
\end{example}

\begin{example}
The most well-known example of a conformal algebra not happens to be a current one is
\textbf{the Virasoro conformal algebra}. Explicitly,
it is a free $H$-module with one generator $v$ 
relative to the following product:
$$
[v \ooo{\lambda} v] = (\partial + 2 \lambda) v.
$$
This example arises from the 1-dimensional GD algebra $V$ = $\Bbbk v$ with $v \oo v=v$.
\end{example}

Another class of conformal algebras that will be of use in this paper is the class of associative
ones. For their structure theory see \cite{BKL2003}, \cite{Kol2006}, \cite{Kol2007}, \cite{KK2019}, etc.

\begin{definition}
An $H$-module $A$ endowed with a linear operation $(\cdot \ooo{\lambda} \cdot)$: 
$A \otimes A \to A[\lambda]$ is called \textbf{associative conformal algebra} if 
the following list of conditions holds:
\begin{equation*}
    (\partial a \ooo{\lambda} b) = - \lambda (a \ooo{\lambda} b), \quad
    (a \ooo{\lambda} \partial b) = (\partial + \lambda) (a \ooo{\lambda} b), \quad
    \text{(sesqui-linearity)}
\end{equation*}
\begin{equation*}
    ((a \ooo{\lambda} b) \ooo{\lambda + \mu} c) = (a \ooo{\lambda} (b \ooo{\mu} c)),
    \quad \text{(associativity)}
\end{equation*}
for every $a, b, c \in A$.
\end{definition}

Given a pair of elements $a,b\in A$, denote by $N(a,b)=\deg_\lambda (a\ooo\lambda b)+1$ with an exception in the case $(a\ooo\lambda b)=0$:
for the latter, set $N(a,b)=0$. In terms of $n$-products, $N(a,b)$
is the minimal $N$ such that $(a\ooo{n} b)=0$ for all $n\ge N$.
The function $N(x,y)$ is called locality function on $A$.

In the case of "ordinary" algebras there exists a well-known way how to turn an
associative algebra into a Lie algebra. Quite similar construction appears in the 
conformal case as well. 
Let $A$ be an associative conformal algebra with product $(\cdot \ooo{\lambda} \cdot)$.
The underlying space endowed with the bracket as follows,
$$
[a \ooo{\lambda} b] = (a \ooo{\lambda} b) - (b \ooo{-\partial-\lambda} a), \quad 
\forall a, b \in A,
$$
is a Lie conformal algebra. Denote it, after the "ordinary"
case, as $A^{(-)}$.

Now, a natural question comes in mind: whether the inverted construction is the case 
as well.

\begin{definition}
Given a Lie conformal algebra $C$ and an associative conformal
algebra $A$, we say $A$ is an associative enveloping conformal  algebra 
of $C$ if there exists a
homomorphism $\tau$: $C \to A^{(-)}$ of conformal algebras such that $A$ is
generated (as an associative conformal algebra) by the image of $C$.
\end{definition}
Denote such a pair as $(A,~\tau)$.

The most interesting of associative envelopes is the universal envelope --- the universal
object in the class, quite resembling the ordinary case. However, it appears the locality
function to be the fine point: there is no sense in speaking about the utterly universal
object. 
%Indeed, let $C$ be a Lie conformal algebra and
%$(\mathcal{U}(C),~\tau_U)$ be the universal enveloping algebra. Then the universal
%property, stated without any
%additional conditions, would be as follows: for any associative conformal envelope 
%$(A,~\tau)$ there exists a complementary
%homomorphism of associative conformal algebras $\varphi$: $\mathcal{U}(C) \to A$, such
%that $\tau = \varphi \circ \tau_U$: $C \to A^{(-)}$. 

%A contradiction appears even in 
%the case of the one-generated abelian Lie conformal algebra 
%$C = H \otimes \Bbbk v$.
%Take $\Com\Conf{(v;~N)} = A$, for some locality function 
%$N(v, v) = N$, as the
%associative envelope, $\tau$ is defined in obvious way. 
%The $\varphi$ can not exist
%for $N > n$, where $n$ is the associative locality in $\mathcal{U}(C)$. 

A proper definition of the ``conditional'' universal enveloping  conformal
algebra was given in
\cite{Ro2000}:
\begin{definition}
Given a Lie conformal algebra $C$ generated as an $H$-module by its subset $B$, and a
function $N$: $B \times B \to Z_+$. An associative conformal enveloping algebra 
$(\mathcal{U}(C;~N),~\tau_U)$ is called the \textbf{universal one relative to the locality
function $N$} if the following universal property holds: for every associative 
envelope $(A,~\tau)$ such that $N(\tau(a), \tau(b)) \le N$ for all $a, b \in B$, there
exists a complementary homomorphism of associative conformal algebras $\varphi$:
$\mathcal{U}(C;~N) \to A$, such that $\tau = \varphi \circ \tau_U$: $C \to A^{(-)}$.
\end{definition}
The universal conformal enveloping algebra $(\mathcal{U}(C;~N),~\tau_U)$ does exist for any
Lie conformal algebra $C$ and locality function $N$, for the explicit construction see
\cite{Ro2000}. Unfortunately, $\tau_U$ will not necessarily be injective. Even more is 
true, it was shown ibid that there
exist Lie conformal algebras $C$ such that $\tau$ is not injective for every 
$(A, \tau)$, i.e., $C$ can not be embedded into an associative conformal algebra at all.
For another approach to construction of associative conformal universal envelopes
see \cite{Kol2020IJAC} (also \cite{KK2022}).

\subsection{Conformal endomorphisms} 
Let us recall the definition of a conformal 
endomorphism and some related constructions.

\begin{definition}
Let $M$ be a unital left $H$-module. A \textbf{conformal endomorphism} of $M$ is such an
element $a$ that there is a linear map $a_{\lambda}$: $M \to M[\lambda]$ and
$a_{\lambda} \partial = (\partial + \lambda) a_{\lambda}$.
\end{definition}

Denote the set of conformal endomorphisms of $M$ by $\Cend{M}$.
Note that $\Cend{M}$ may be equipped with the structure of an $H$-module by the rule:
$$
(\partial a)_{\lambda} = - \lambda a_{\lambda}, \quad a \in \Cend{M}. 
$$

Moreover, if $M$ is a finitely generated $H$-module then $\Cend{M}$ is an associative
conformal algebra itself with the following multiplication rule: 
\begin{equation} \label{mult}
(a \ooo{\lambda} b)_{\mu} = a_{\lambda} b_{\mu - \lambda}, \quad a, b \in
\Cend{M}.
\end{equation}

Unfortunately, be $M$ infinitely generated, the presented multiplication rule fails due 
to locality. Though one can make out a subspace of finitary
conformal endomorphisms 
$\Cendf{M} \subseteq \Cend{M}$ that still proves to be an associative conformal algebra.
The idea is to consider only those conformal endomorphisms that have 
bounded on powers of $\lambda$ for the 
images on generators of $M$. Explicitly, let $B = \{b_1, b_2, b_3, \dots \}$ be 
a set of generators of $M$ as of an $H$-module,
$$
\Cendf{M} = \{a \in \Cend{M} \mid \exists n(a) \in \mathbb{N}: \text{deg}(a_{\lambda}b_i) 
\le n(a) \}.
$$
Clearly, it is closed under the multiplication rule \eqref{mult}.
Note that the definition of $\Cendf{M}$ depends on the choice of 
a system of generators $B$ in the case when $M$ is infinitely generated.

The inclusion is actually equality if $M$ is finitely generated. 
In general, though, this is not the case. 
Indeed, if one define $a_{\lambda}b_i = b_i \lambda^i$ for every $b_i \in B$, then
$a \notin \Cendf{M}$. The  $\lambda$-product of 
two suchlike conformal endomorphisms is not a polynomial on $\lambda$
but rather an infinite series. For example,
$$
(a \ooo{\lambda} a)_{\mu} b_i = a_{\lambda}(a_{\mu - \lambda} b_i) = 
b_i \lambda^i (\mu - \lambda)^i.
$$
Plainly, for any $n \in \mathbb{N}$ there is $i \in \mathbb{N}$ such that 
$a \ooo{n} a \ne 0$, a contradiction with locality.

Explicitly, the space $\Cendf{M}$ for free $H$-module $M = H \otimes V$ can be
uniquely presented as follows
(see \cite{Ret2001}), up to isomorphism:
$$
\Cendf{M} \cong H \otimes (\End{V})[x] \cong \Bbbk [\partial, x] \otimes \End{V},
$$
where $\End{V}$ is the space of "usual" endomorphisms of the vector space $V$.
Conformal action is as follows,
\begin{equation}\label{action}
(f(\partial, x) \otimes \alpha)_{\mu} (h(\partial) \otimes u) = 
f(-\mu, \partial)h(\partial + \mu) \otimes \alpha(u),
\end{equation}
where $f(\partial, x) \in \Bbbk [\partial, x]$, $h(\partial) \otimes u \in M$ and
$\alpha \in \End{V}$.

In details, the associative conformal multiplication \eqref{mult} is given by the rule,
\begin{equation}\label{product}
    (f(\partial, x) \otimes \alpha) \ooo{\lambda} (g(\partial, x) \otimes \beta) = 
    f(-\lambda, x)g(\partial+\lambda, x + \lambda) \otimes \alpha \beta,
\end{equation}
where $f(\partial, x), g(\partial, x) \in \Bbbk [\partial, x]$ and 
$\alpha, \beta \in \End{V}$.

\section{Universal associative conformal envelope of locality N = 3}

\subsection{Existence}

In this section we present the approach that allows us to establish that every quadratic
conformal algebra constructed on a special GD algebra embeds into its universal enveloping associative
conformal  algebra with locality bound $N = 3$.

Let $V$ be a special GD algebra and let $P$ be its differential Poisson enveloping. Say,
without loss of generality, that $P$ has an identity element on the associative commutative
multiplication (if it does not have one then we consider $P^{\#}=P\oplus \Bbbk 1$). 

Recall that $\ad_{a}$: $x\mapsto \{a,x\}$ stands for the adjoint operator,
$L_{a}$: $x\mapsto ax$ for the operator of commutative multiplication, 
and $\id = \id_{P}$ for the identity operator on $P$. All 
these maps are extended $H$-linearly to linear operators on
$H\otimes P$. 

Hereinafter, in order to avoid confusion 
between $d$ and $\partial$, replace the latter with $T$. Also, let us sometimes omit the
$\otimes$-sign to make the computations less cumbersome.

Let us state the following helpful lemma

\begin{lemma}\label{lemma}
Let $V$ be a special GD algebra and $P(V)$ be its differential Poisson
enveloping. Let $L(V)$ be the quadratic conformal algebra related to $V$. Then the 
$H$-linear map 
$$
\tau: L(V) \to \Cendf{(H \otimes P(V))}^{(-)}, 
$$
defined on $H$-generators as follows:
\begin{equation} \label{hom}
    \tau(a) = \ad_{a} + x [d, L_{a}] - T (d + \id) L_{a}, \quad \forall a\in V,
\end{equation}
is an injective homomorphism of conformal Lie algebras.
\end{lemma}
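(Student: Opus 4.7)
The plan is to verify in turn (i) that $\tau(a)$ really is a finitary conformal endomorphism, (ii) that $\tau$ preserves the $\lambda$-bracket on generators, and (iii) that $\tau$ is injective. Writing $\tau(a)=1\otimes A_a+x\otimes B_a-T\otimes C_a$ with $A_a=\ad_a$, $B_a=[d,L_a]=L_{d(a)}$, $C_a=(d+\id)L_a\in\End P(V)$, and using the identification $\Cendf(H\otimes P(V))\cong\Bbbk[T,x]\otimes \End P(V)$, one sees at once that $\tau(a)$ is a polynomial of degree at most $1$ in each of $T,x$, whence finitary. Since $\tau$ is $H$-linear by construction, the identity $[\tau(a)\ooo{\lambda}\tau(b)]=\tau([a\ooo{\lambda}b])$ at $a,b\in V$ propagates to all of $L(V)=H\otimes V$ by sesqui-linearity, so for (ii) it suffices to work on $V$.

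For (ii), expand both sides using \eqref{product} and the formula $[\alpha\ooo{\lambda}\beta]=(\alpha\ooo{\lambda}\beta)-(\beta\ooo{-T-\lambda}\alpha)$. Each of $\tau(a)\ooo{\lambda}\tau(b)$ and $\tau(b)\ooo{-T-\lambda}\tau(a)$ splits into nine summands, producing altogether an element of $\Bbbk[T,x,\lambda]\otimes\End P(V)$ which has to be matched monomial by monomial in the $\lambda^iT^jx^k$ against
\[
\tau([a\ooo{\lambda}b])=\tau([a,b])+T\,\tau(b\oo a)+\lambda\,\tau(a\oo b+b\oo a).
\]
The key operator identities to invoke along the way are $[\ad_a,\ad_b]=\ad_{[a,b]}$ (Jacobi), $[\ad_a,L_b]=L_{[a,b]}$ (Poisson--Leibniz), $[d,L_a]=L_{d(a)}$ and $[d,\ad_a]=\ad_{d(a)}$ ($d$ is a derivation of both structures on the differential Poisson envelope), $L_aL_b=L_{ab}$, and the defining rules $a\oo b=a\,d(b)$, $[a,b]=\{a,b\}$ of $P^{(d)}$. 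Sample sanity check: the $[\lambda^0T^0x^0]$-coefficient on the left is $[A_a,A_b]=\ad_{\{a,b\}}=A_{[a,b]}$, matching the constant term on the right; and at $[\lambda^0T^0x^1]$ both $[\ad_a,L_{d(b)}]$ and $-[\ad_b,L_{d(a)}]$ combine, via $d\{a,b\}=\{d(a),b\}+\{a,d(b)\}$, to $L_{d[a,b]}=B_{[a,b]}$.

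For (iii), evaluate on $1\otimes 1\in H\otimes P(V)$. Using $\{a,1\}=0$ and $d(1)=0$ one computes $\tau(a)_\mu(1\otimes 1)=T\otimes d(a)+\mu\otimes(d(a)+a)$, so $H$-linearity gives, for $u=\sum_i p_i(T)\otimes a_i$ with $\{a_i\}$ a basis of $V$,
\[
\tau(u)_\mu(1\otimes 1)=T\otimes d\!\left(\sum_i p_i(-\mu)\,a_i\right)+\mu\otimes\!\left(d\!\left(\sum_i p_i(-\mu)\,a_i\right)+\sum_i p_i(-\mu)\,a_i\right).
\]
If this vanishes in $H\otimes P(V)[\mu]$, extracting first the coefficient of $T^1$ and then of $T^0$ forces $\mu\sum_i p_i(-\mu)\,a_i=0$ in $P(V)[\mu]$, hence $\sum_i p_i(-\mu)\,a_i=0$ for every scalar $\mu$, and linear independence of $\{a_i\}$ in $V\hookrightarrow P(V)$ gives $p_i=0$ for all $i$.

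The main obstacle is the bookkeeping in step (ii): eighteen raw summands produce a polynomial in three formal variables with endomorphism coefficients, and the $T$-shift introduced by the substitution $\mu\mapsto -T-\lambda$ has to be tracked with care. Once the correct operator identities are marshalled in the right order, however, each monomial on the left-hand side collapses directly onto its counterpart on the right.
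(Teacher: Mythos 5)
Your proposal is correct and follows essentially the same route as the paper: rewrite $\tau(a)=\ad_a+xL_{a'}-T(L_{a'}+L_a(d+\id))$, expand $\tau(a)\ooo{\lambda}\tau(b)-\tau(b)\ooo{-T-\lambda}\tau(a)-\tau([a\ooo{\lambda}b])$ via the product rule in $\Cendf$, cancel using exactly the operator identities you list ($[\ad_a,L_b]=L_{\{a,b\}}$, $[d,L_a]=L_{a'}$, $[d,\ad_a]=\ad_{a'}$, Jacobi, $L_aL_b=L_{ab}$), and prove injectivity by evaluating $\tau(u)_\mu$ on $1$ and extracting the $T^1$ and $T^0$ coefficients --- your two spot checks and your injectivity computation all agree with the paper's. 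The only shortfall is one of completeness rather than of method: you verify two representative coefficients and declare the remaining cancellation bookkeeping, whereas the paper's proof consists mostly of carrying that cancellation out in full for every power of $\lambda$.
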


\begin{proof}
Recall that $a^{(n)}$ stands for $d^n(a)$, for $n = 1, 2$ assign simply $a'$, $a''$.
First, let us compute how the
considered operators commute in $\End P$, modulo the Leibniz rule:
\begin{align}
L_{b} \ad_{a} c &= b \cdot \{a, c\} = \{a, b \cdot c\} - \{a, b\} \cdot c = 
(\ad_{a} L_{b} - L_{\{a, b\}}) c, \label{Lad}
\\
d \ad_{a} c &= d \{a, c\} = \{a', c\} + \{a, c'\} = (\ad_{a'} + \ad_{a} d) c, \label{dad}
\\
d L_{b} c &= d (b \cdot c) = b' \cdot c + b \cdot c' = (L_{b'} + L_{b} d) c,
\label{dL}
\end{align}
for every $a, b, c \in P$. Then \eqref{hom} turns into $\tau(a)=\ad_{a} + 
x L_{a'} - T (L_{a'} + L_{a}(d + \id))$ for every $a \in V$.

Now, compute $\tau(a)\ooo{\lambda}\tau(b)$:
\begin{multline} \label{alambdab}
(\ad_{a} + x L_{a'} - T (L_{a'} + L_{a}(d + \id)))
\ooo{\lambda} (\ad_{b} + x L_{b'} - T (L_{b'} + L_{b}(d + \id))) 
\\
= (\ad_{a} + x L_{a'} + \lambda (L_{a'} + L_{a}(d + \id))) 
(\ad_{b} + (x +\lambda) L_{b'} - (T + \lambda) (L_{b'} + L_{b}(d + \id)))
\\
= (\ad_{a} + (x + \lambda) L_{a'} + \lambda L_{a}(d + \id)) 
(\ad_{b} + (x - T) L_{b'} - (T + \lambda) L_{b}(d + \id)).
\end{multline}

Open the parentheses and gather terms at
the powers of $\lambda$. Then order operators
by the rule $\ad < L < d$, according to \eqref{Lad} - \eqref{dL} and 
$L_a L_b = L_{a\cdot b}$. 
The coefficient at $\lambda^0$ is
\begin{multline}\label{lam0}
\ad_{a}\ad_{b} + (x - T) \ad_{a} L_{b'} - T \ad_{a} L_{b}(d + \id)
\\
+ x L_{a'}\ad_{b} + x(x - T) L_{a'} L_{b'} - x T L_{a'} L_{b}(d + \id) 
\\
= \ad_{a}\ad_{b} + (x - T) \ad_{a} L_{b'} - T \ad_{a} L_{b}(d + \id) 
\\
+ x (\ad_{b}L_{a'} - L_{\{b, a'\}}) + x(x - T) L_{a'\cdot b'} 
- xT L_{a'\cdot b} (d + \id).
\end{multline}

As a coefficient at $\lambda$, we have
\begin{multline*}
- \ad_{a} L_{b}(d + \id) - x L_{a'} L_{b}(d + \id) + L_{a'} \ad_{b} +
(x - T) L_{a'} L_{b'} - T L_{a'} L_{b}(d + \id) 
\\
+ L_{a}(d + \id) \ad_{b} 
+ (x - T) L_{a}(d + \id) L_{b'} - T L_{a}(d + \id) L_{b}(d + \id), 
\end{multline*}
which, after some routine, becomes
\begin{multline} \label{lam1}
- \ad_{a} L_{b}(d + \id) - (x + T) L_{a'\cdot b}(d + \id) + 
\ad_{b}(L_{a'} + L_a (d+id)) - L_{\{b, a\}'} + \ad_{b'} L_a 
\\
- L_{\{b, a\}} (d + \id) + (x - T) (L_{a\cdot b''} + L_{a'\cdot b'}) 
+ (x - 2T) L_{a\cdot b'} (d + \id) - T L_{a\cdot b} (d + \id)^2. 
\end{multline}

The coefficient at $\lambda^2$ is
\begin{multline} \label{lam2}
- L_{a'} L_b (d + \id) - L_a (d + \id) L_b (d + \id) = - L_{(a\cdot b)'} (d + \id) 
- L_{a\cdot b} (d + \id)^2.
\end{multline}

In order to compute $\tau(b)\ooo{- T - \lambda} \tau(a)$ one need to switch $a$ and $b$,
substitute $\lambda$ by $-T-\lambda$, and gather similar terms on powers of $\lambda$
in the original formula \eqref{alambdab}.
Then, at $\lambda^0$ we have
\begin{multline*}
\ad_{b}\ad_{a} + (x - T) \ad_{b} L_{a'} - T \ad_{b} L_{a}(d + \id) 
+ x (\ad_{a}L_{b'} - L_{\{a, b'\}}) 
+ x(x - T) L_{b'\cdot a'} - xT L_{b'\cdot a} (d + \id), 
\end{multline*}
and the addition from the other groups
\begin{multline*}
- T (- \ad_{b} L_{a}(d + \id) - (x + T) L_{b'\cdot a}(d + \id) 
+ \ad_{a}(L_{b'} + L_b (d+id)) - L_{\{a, b\}'} 
\\
+ \ad_{a'} L_b - L_{\{a, b\}} (d + \id) 
+ (x - T) (L_{b\cdot a''} + L_{b'\cdot a'})
+ (x - 2T) L_{b\cdot a'} (d + \id) 
\\
- T L_{b\cdot a} (d + \id)^2)
+ T^2 (- L_{(b\cdot a)'} (d + \id) - L_{b\cdot a} (d + \id)^2).
\end{multline*}
Gather similar terms:
\begin{multline} \label{lam00}
\ad_{b}\ad_{a} + (x - T) \ad_{b} L_{a'} + (x - T) \ad_{a}L_{b'} - x L_{\{a, b'\}}
\\
+ (x - T)^2 L_{b'\cdot a'} - T \ad_{a} L_b (d+id) 
+ T L_{\{a, b\}'} - T  \ad_{a'} L_b + T L_{\{a, b\}} (d + \id) 
\\
- T (x - T) L_{b\cdot a''} - T (x - T) L_{b\cdot a'} (d + \id).
\end{multline}

The coefficient at $\lambda$ is
\begin{multline*}
\ad_{b} L_{a}(d + \id) + (x + T) L_{b'\cdot a}(d + \id) - 
\ad_{a}(L_{b'} + L_b (d+id)) + L_{\{a, b\}'} - \ad_{a'} L_b 
\\
+ L_{\{a, b\}} (d + \id) - (x - T) (L_{b\cdot a''} + L_{b'\cdot a'}) 
- (x - 2T) L_{b\cdot a'} (d + \id) + T L_{b\cdot a} (d + \id)^2,
\end{multline*}
and the addition
$$
2T (- L_{(b\cdot a)'} (d + \id) - L_{b\cdot a} (d + \id)^2).
$$
Gather similar terms:
\begin{multline}\label{lam11}
\ad_{b} L_{a}(d + \id) + (x - T) L_{b'\cdot a}(d + \id) - 
\ad_{a}(L_{b'} + L_b (d+id)) + L_{\{a, b\}'} - \ad_{a'} L_b 
\\
+ L_{\{a, b\}} (d + \id) - (x - T) (L_{b\cdot a''} + L_{b'\cdot a'}) 
- x L_{b\cdot a'} (d + \id) - T L_{b\cdot a} (d + \id)^2.
\end{multline}

The expression at $\lambda^2$ is exactly \eqref{lam2} due to the product in $P$ is
commutative.

From the other side,  
\begin{multline} \label{t(ab)}
\tau([a \ooo{\lambda} b]) = \tau([a, b] + T (b\oo a) + \lambda (a\oo b + b\oo a)) 
\\
= \ad_{\{a, b\}} + (x - T) L_{\{a, b\}'} - T L_{\{a, b\}}(d + \id) 
\\
+ T (\ad_{a'\cdot b} + (x - T) L_{(a'\cdot b)'} - T L_{a'\cdot b} (d + \id)) 
\\
+ \lambda (\ad_{(a\cdot b)'} + (x - T) L_{(a\cdot b)''} - T L_{(a\cdot b)'} (d + \id))
\end{multline}

Now, consider the operator 
\[
\tau(a) \ooo{\lambda} \tau(b) - \tau(b) \ooo{- T- \lambda} 
\tau (a) - \tau([a \ooo{\lambda} b]) \in \Cendf{(H \otimes P(V))}
\]
and verify that it is
identical to the zero operator. 
Compare the coefficients at
$\lambda$ found in \eqref{lam1}, \eqref{lam11}, \eqref{t(ab)}: 
the respective sum, modulo some routine transformations, equals to
$$
1 \otimes (\ad_{b} L_{a'} + \ad_{b'} L_a + \ad_{a} L_{b'} + \ad_{a'} L_b 
- \ad_{(a\cdot b)'}), \quad  a, b \in V.
$$
This is essentially a linear operator acting on $P$, so, for any $c \in P$:
\begin{multline*}
1 \otimes (\ad_{b} L_{a'} + \ad_{b'} L_a + \ad_{a} L_{b'} + \ad_{a'} L_b 
- \ad_{(a\cdot b)'})_{\mu}c
\\
= \{b, a'\cdot c\} + \{b', a\cdot c\} + \{a, b'\cdot c\} + \{a', b\cdot c\}
- \{a'\cdot b, c\} - \{a\cdot b', c\}.
\end{multline*}
Apply the Leibniz rule to note that the RHS of the last relation vanishes, as demanded.

The coefficient at $\lambda^0$ obtained from \eqref{lam0}, \eqref{lam00}, 
\eqref{t(ab)}, after cancellation of similar terms,
turns into
\begin{equation}
\ad_{a}\ad_{b} - \ad_{b}\ad_{a} - \ad_{\{a, b\}} 
+ T  \ad_{a'} L_b + T \ad_{b} L_{a'} - T \ad_{a'\cdot b}.
\end{equation}
Here the first three terms are exactly Jacobi identity and the last three terms represent the Leibniz rule. Hereby, $\tau$
is indeed a homomorphism.

As for injectivity, 
%let us recall (see \cite{KKP}) that
%images of the map $\tau$ in $\Cendf$ acts faithfully on the respective vector space, thus,
%$$
%\tau(v)_{\mu} (H \otimes P(V)) = 0 \iff \tau(v) = 0,
%$$ 
%for any ...
assume
$v \in L(V)$ and $\tau(v)=0$. 
Let $v = \sum_{s \ge 0} T^s \otimes a_s$, $a_s \in V$.
Then by \eqref{action} we have
%
% \? Кажись, там было пропущено T в формуле для \tau 
% Если исправить, то нужны чуть другие аргументы для док-ва v=0:
\begin{multline*}
\tau(v)_{\mu} 1 = \sum_{s \ge 0} T^s (\ad_{a_s} + (x - T) L_{a_s'} 
- T L_{a_s} (d + \id))_{\mu}1 
= \sum_{s \ge 0} (- \mu)^s ((T + \mu) L_{a_s'} + \mu L_{a_s} )1 
\\
= \sum_{s \ge 0}(-\mu)^s T L_{a_{s}'}1 - \sum_{s \ge 0} 
(-\mu)^{s+1} (L_{a_s'} + L_{a_{s}})1 
=
\sum_{s \ge 0}(-\mu)^s T a_{s}' - \sum_{s \ge 0} 
(-\mu)^{s+1} (a_s' + a_{s})  
\end{multline*}
If all coefficients at $\mu^{s}$, $s\ge 0$, are zero 
then we obtain the recurrent conditions $a_s' = 0$, $a_s' + a_s = 0$, $s\ge 0$.
Hence, all $a_s$ are zero and $\tau(v)=0$ implies $v=0$, as demanded.
\end{proof}

Now, we are able to prove the main statement of the presented work.

\begin{theorem}
Let  $L(V)$ be a quadratic conformal algebra related to a special Gel'fand--Dorfman algebra $V$. 
Then $L(V)$ embeds into its universal associative enveloping conformal
algebra $\mathcal{U}(L(V);~N)$ with respect to the associative locality bound $N = 3$.
\end{theorem}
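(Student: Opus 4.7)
The theorem follows almost formally from Lemma \ref{lemma} together with the universal property of $\mathcal U(L(V);N)$. First I would invoke Lemma \ref{lemma} to produce the injective homomorphism of Lie conformal algebras
\[
\tau : L(V) \longrightarrow \Cendf{(H\otimes P(V))}^{(-)}.
\]
Let $A$ denote the associative conformal subalgebra of $\Cendf{(H\otimes P(V))}$ generated by $\tau(1\otimes V)$; by construction $(A,\tau)$ is an associative enveloping conformal algebra of $L(V)$, and $\tau$ is injective.

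Next, I would verify that the locality bound on the generators $\tau(V)$ is at most $3$. By the explicit formula \eqref{hom}, each $\tau(a)$ with $a\in V$ is a polynomial of degree $1$ in $T$. By the multiplication rule \eqref{product} in $\Cendf{(H\otimes P(V))}$, the $\lambda$-product $\tau(a)\ooo{\lambda}\tau(b)$ is obtained by substituting $T\mapsto -\lambda$ in the left factor and $T\mapsto T+\lambda$, $x\mapsto x+\lambda$ in the right factor. Each substitution contributes at most degree $1$ in $\lambda$, so the product has degree at most $2$ in $\lambda$. This is already visible in the proof of Lemma \ref{lemma}, where only $\lambda^0$, $\lambda^1$, $\lambda^2$ contributions appear; see in particular \eqref{lam2}. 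Hence $N(\tau(a),\tau(b))\le 3$ for all $a,b\in V$.

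Finally, applying the universal property of $\mathcal U(L(V);N=3)$ to the envelope $(A,\tau)$ yields a homomorphism of associative conformal algebras $\varphi : \mathcal U(L(V);3) \to A$ with $\tau = \varphi\circ \tau_U$. Since $\tau$ is injective, $\tau_U$ must be injective as well, which is precisely the desired embedding of $L(V)$ into $\mathcal U(L(V);3)^{(-)}$.

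I do not foresee any real obstacle at the level of the theorem itself. The technical heart of the argument---building an injective Lie conformal homomorphism from $L(V)$ into the commutator algebra of an associative conformal algebra, and ensuring that the image has low locality---has already been absorbed into Lemma \ref{lemma}. What remains is the elementary degree count in $T$ for $\tau(a)$, which gives the locality bound $3$ after the $\Cendf{(H\otimes P(V))}$ product is formed, followed by an immediate appeal to the universal property of $\mathcal U(L(V);3)$.
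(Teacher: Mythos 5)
Your proposal is correct and follows essentially the same route as the paper: invoke Lemma \ref{lemma} for the injective Lie conformal homomorphism $\tau$, observe from the explicit computation (or your degree count in $T$ and $x$ under the substitution rule \eqref{product}) that $\deg_\lambda\bigl(\tau(a)\ooo{\lambda}\tau(b)\bigr)\le 2$, so the subalgebra generated by $\tau(V)$ is an envelope with locality bound $N=3$, and then conclude by the universal property that $\tau$ factors through $\mathcal U(L(V);3)$, forcing $\tau_U$ to be injective. No gaps.
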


\begin{proof}
Due to the previous reasoning along with Lemma 1, there exists
an injective homomorphism
of conformal Lie algebras
$\tau$: $L(V) \to \Cendf{(H \otimes P(V))}^{(-)}$, 
where $P(V)$ is a unital differential
Poisson enveloping of $V$. Consider the associative conformal subalgebra in $\Cendf{(H\otimes P(V))}$ 
generated by $\tau(V)$, refer
to it as $A(V)$.

The introduced $A(V)$ is an associative envelope
 of $L(V)$ for the associative
locality bound $N = 3$. The latter 
follows from the computation 
of $\tau(a)\ooo\lambda \tau(b)$ in the proof of Lemma 1:
the degree in $\lambda $ of this polynomial 
does not exceed two. 
Note that the 2nd product of $\tau(a)$ and $\tau(b)$ may not be zero (see \eqref{lam2}).

The universal property grants that there exists 
a homomorphism of associative conformal algebras 
$\varphi : \mathcal{U}(L(V);~N)\to A(V)$ 
extending the  canonical map $L(V)\to \mathcal{U}(L(V);~N)$
for $N = 3$.
Hence, $L(V)$ embeds into the respective 
universal enveloping associative 
conformal algebra. 
\end{proof}

\begin{remark}
The theorem proves to be true in the case of superalgebras as well, one just need slightly
adjust all the definitions and calculations, the reasoning above will do perfectly.
\end{remark}

\subsection{Isomorphism}
The surjective homomorphism $\varphi:
\mathcal{U}(L(V);~N) \to A(V)$ mentioned above 
is not injective in general, but sometimes it is.
This is interesting to note that for the case 
when $L(V)$ is the Virasoro 
conformal algebra the associative envelope $A(V)$
constructed above is the universal one (corresponding to 
the locality bound $N=3$).

\begin{example}
Consider the GD algebra $V = \Bbbk \< v \>$, 
where $v \oo v = v$.
Then $L(V) = \mathrm{Vir} = Hv$ 
with the following $\lambda $-product on the generator 
$$
[v\ooo{\lambda} v] = (T + 2\lambda) v.
$$
Then the single conformal homomorphism 
$\tau(v) = x \otimes \id - T \otimes (d + \id) L_v$ 
generates an associative conformal subalgebra
$A(V)$ in $\Cendf{(H \otimes P(V))}$, where $P(V) = \Bbbk [v]$ and 
$d = \frac{d}{dv}$. 
It is more convenient to treat $P(V)$ as a
module generated by $v$ over the (associative) Weyl algebra 
$W = \Bbbk \<d, p| dp - pd = 1\>$. Here
$p$ stands for $L_v$ and $1$ for $\id_{P(V)}$.
Then $\varphi(v) = x - T(d+1)p$.

In \cite{Kol2020} it was shown that the $H$-linear basis of 
$\mathcal{U}(Vir;~3)$ 
is
the following one:
\begin{align}
(L_0)^s v&, \quad s \ge 0, \label{L0}
\\
(L_0)^q (L_1)^l L_2 v&, \quad q, l \ge 0. \label{L012}
\end{align}
Here $L_n$, $n\ge 0$, stands for the linear operator $(v \ooo{n} \cdot)$ on 
$\mathcal{U}(Vir;~3)$.

Now, let us calculate the images of \eqref{L0}, \eqref{L012} in $A(V)$.
Start with 
\begin{multline*}
\varphi(v) \ooo{\lambda} \varphi(v) = (x - T (d + 1)p) \ooo{\lambda} 
(x - T (d + 1)p) 
\\
= x(x - T (d+1)p) + \lambda (x - T ((d+1)p)^2)
+ \lambda^2((d+1)p (1 - (d+1)p)).
\end{multline*}
Hence, $\varphi(v \ooo{0} v) = x(x - T (d+1)p)$ and it is the matter of easy induction 
to deduce that 
\begin{equation} \label{imL0}
    \varphi((L_0)^s v) = x^s(x - T (d+1)p), \quad s \ge 0.
\end{equation}

We obtain as well that $ \varphi(v \ooo{2} v) = 2(d+1)p (1 - (d+1)p)$. 
Note that
$\varphi(v \ooo{1} v)$ is linearly dependent with $\varphi(v)$ and
$\varphi(v \ooo{2} v)$ over $H$, that is an exact consequence of 
the defining relations in $\mathcal{U}(Vir;~3)$.
%namely,
%$$
%\varphi(v \ooo{1} v) = x - T ((d+1)p)^2 = x - T (d+1)p + T (d+1)p (1 - (d+1)p) = 
%\varphi(v) + 2 T \varphi(v \ooo{2} v).
%$$

Now, $2 \varphi(v \ooo{\lambda} (v \ooo{2} v))$ is as follows:
$$
(x + \lambda (d+1)p)(d+1)p(1 - (d+1)p) = x (d+1)p(1 - (d+1)p) + 
\lambda ((d+1)p)^2(1 - (d+1)p).
$$
Hence, the application of $L_1$ to $\varphi(v \ooo{2} v)$ is just
the multiplication by $(d+1)p$ from the left, and the action of 
$L_0$ is just multiplication by $x$.
 Thus,
\begin{equation} \label{imL012}
\varphi((L_0)^q (L_1)^l L_2 v) = \frac{1}{2} x^q ((d+1)p)^l (1 - (d+1)p), \quad q, l 
\ge 0.
\end{equation}

% \? Этот кусок надо переделать в "первокурсное" док-во линейной независимости
% над H:
% пусть линейная комбинация с коэффициентами из H равна 0,
% применим к v^m,
% покажем, что получается не нуль
Suppose there is some nontrivial linear combination 
$$
\sum_{s \ge 0}h_s(T)
x^s(x - T (d+1)p) + \sum_{q,l \ge 0}h'_{q,l}(T) x^q ((d+1)p)^l (1 - (d+1)p) = 0,
$$
where $h_s(T), h'_{q,l}(T) \in H$. Apply the expression in the LHS to 1,
modulo \eqref{imL0}, \eqref{imL012}:
\begin{multline} \label{lincomb}
0 = \sum_{s \ge 0}h_s(-\mu) T^s(T + \mu (d+1)p)1
+ \sum_{q,l \ge 0}h'_{q,l}(-\mu) T^q ((d+1)p)^l (1 - (d+1)p)1
\\
= \sum_{s \ge 0}h_s(-\mu) (T^{s+1} + \mu T^s)1 + 
\sum_{s \ge 0}h_s(-\mu) \mu T^s v
- \sum_{q,l \ge 0}h'_{q,l}(-\mu) T^q \sum_{i = 1}^{l+1} v^i c_i,
\end{multline}
where the coefficients $c_i$ are the following:
$$
c_{l+1} = 1, \quad 
c_i = \sum_{2 \le j_1 \le \dots \le j_{l+1-i} \le 1+i} \prod_{k = 1}^{l+1-i} j_i,
\quad i = \overline{1,l}.
$$
Consider $s = s_{\max}$, hence, the first sum in the RHS of \eqref{lincomb} gives
$h_{s_{\max}} \equiv 0$. Then gradually descend up to $s = 0$, thus 
$h_s \equiv 0$. Hereby, the first two sums in \eqref{lincomb} vanish. 
In the same fashion, regard the last sum for $l = l_{\max}$ and
obtain $h'_{q,l_{\max}} \equiv 0$, for all $q$, from the coefficient 
at $v^{1+l_{\max}}$. Similarly, descend up to $l = 0$, thus $h'_{q,l} \equiv 0$.
A contradiction.

Therefore, we establish that, given a Lie conformal algebra $L(V) = Vir$ related 
to the GD algebra $V = \Bbbk \<v\>$ with the multiplication rule $v \oo v = v$,
there is the isomorphism of associative conformal algebras
$$
\Cendf{(H \otimes P(V))} \supset A(V) \cong \mathcal{U}(Vir;~3).
$$
\end{example}

\begin{remark}
Mention should be made of yet another equivalent presentation for 
$\mathcal{U}(Vir;~3)$, that is given rise for by the adjoint representation, namely,
$$
v \mapsto [v \ooo{\lambda} \cdot] = x \otimes \id - T \otimes dp \in 
\Cendf{(H \otimes P(V))}. 
$$
The $T$-linear basis itself is
$$
x^q \otimes (dp)^l (1 - dp) \quad \text{and} 
\quad x^{s+1} \otimes \id - x^s T \otimes dp, \quad q, l, s \ge 0,
$$
The computations are quite resembling the ones just made.
\end{remark}

Such an isomorphism is not always the case. An example as simple as the
following one will do.

\begin{example}
Take the GD algebra $V = \Bbbk \< v \>$, where $v \oo v = 0$. Consider 
$L(V) = \Cur{V} \cong H \otimes V$, it is an abelian Lie conformal algebra.
If we consider 
 $P(V) = \Bbbk [v]$ as above, and $d \equiv 0$, 
 then $\tau(v) = 1 \otimes \ad_v - T \otimes L_v$.
However, the associative conformal subalgebra
$A(V)$ generated in $\Cendf{(H \otimes P(V))}$ by $\tau (v)$
is not isomorphic to the respective universal envelope.
For an abelian Lie algebra, the universal enveloping 
associative conformal algebra is the free commutative one.
It has the same basis as mentioned in \eqref{L0}, \eqref{L012},
but, for example,
$$
\varphi(L_0 \dots L_0 L_1 \dots L_1 L_2 v) 
= - 2 \ad_v\dots \ad_v L_v\dots L_v = 0.
$$ 

%On account of \cite{KK2022}, one can pick $T$-linear basis as follows, 
%let $L_n$ stands for the linear endomorphism $v \ooo{n}$ on 
%$\mathcal{U}(\Cur{V};~3)$,
%\begin{align}
%(L_0)^s v&, \quad s \ge 0, \label{L0'}
%\\
%(L_0)^q (L_1)^l L_2 v&, \quad q, l \ge 0. \label{L012'}
%\end{align}
%It is easy to establish that $\varphi$: $\mathcal{U}(L(V);~3) \to A(V)$
%has nontrivial kernel. Say, 
%$$
%\varphi(L_0 \dots L_0 L_1 \dots L_1 L_2 v) 
%= - 2 \ad_v\dots \ad_v L_v\dots L_v = 0,
%$$ 
%due to the initial algebra $V$ is abelian.

What is more interesting is that 
even if we choose the richest Poisson differential enveloping algebra 
for $V$ (the universal one) then the kernel of $\varphi $ is still nontrivial.

%The construction itself is quite in spirit of the example \eqref{ex1}. 
%See theorem 3 of \cite{KKP} for details.
Denote the universal Poisson differential enveloping algebra 
of a GD algebra $V$ as $P_d(V)$.
In the case when $V$ is 1-dimensional algebra with zero operations,
we have 
$$
P_d(V) = \faktor{(\Pois\< v, v', v'', \dots \>^{\#})}{I_V},
$$
where $I_V $ is the differential ideal in the free Poisson algebra
$\Pois\< v, v', v'', \dots \>$
generated by $v\cdot v'$. 
Let us prove the helpful lemma.

\begin{lemma}\label{lem:PoisRel}
In the Poisson algebra $P_d(V)$ as above
the following equalities hold:
\begin{equation}\label{conseq}
0 = \{v,~\{v^{(k_1)},~\{ \dots \{v^{(k_{m-1})},~v^{(k_m)}\}\dots \} \cdot v,
\quad \text{for all}~k_1,~\dots,~k_m \ge 0.
\end{equation}
\end{lemma}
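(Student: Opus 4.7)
My plan is induction on $m$, combined with the Leibniz rule and a sublemma controlling products. The starting observation is that $\{v,v\}=0$ by antisymmetry, so by Leibniz
\[
\{v,Y\}\cdot v \;=\; \{v, Y\cdot v\} - Y\cdot\{v,v\} \;=\; \{v, Y\cdot v\}
\]
for every $Y \in P_d(V)$. The key rewrite used throughout is
\[
v\cdot v^{(k)} \;\equiv\; -\sum_{i=1}^{k-1}\binom{k-1}{i}\, v^{(i)} v^{(k-i)}\pmod{I_V}, \qquad k\ge 1,
\]
obtained by solving $d^{k-1}(v\cdot v')\in I_V$ for the term $v\cdot v^{(k)}$.

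The main technical ingredient I would prove is a sublemma: $v^{(i)}\cdot\{v, v^{(j)}\}\in I_V$ for all $i\ge 0$ and $j\ge 1$. This is a joint induction on $i+j$. The base $v\cdot\{v,v'\} = \{v, v\cdot v'\}\in I_V$ is immediate. For the inductive step, the case $i=0$ reduces via the rewrite above to a sum of terms $v^{(a)}\{v, v^{(b)}\}$ with $a+b=j$ and $a\ge 1$, while the case $i\ge 1$ is handled by applying $d$ to the $I_V$-element $v^{(i-1)}\{v,v^{(j)}\}$ (known from the previous level), expanding $d\{v, v^{(j)}\} = \{v', v^{(j)}\}+\{v, v^{(j+1)}\}$, and combining with auxiliary relations such as $v\cdot\{v^{(i)}, v'\}\equiv v'\cdot\{v,v^{(i)}\}\pmod{I_V}$ coming from $\{v^{(i)}, v\cdot v'\}\in I_V$.

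Given the sublemma, the base case $m=1$ follows directly: $\{v,v^{(k_1)}\}\cdot v = \{v, v\cdot v^{(k_1)}\} \equiv -\sum_i\binom{k_1-1}{i}\bigl(v^{(i)}\{v, v^{(k_1-i)}\} + v^{(k_1-i)}\{v, v^{(i)}\}\bigr)\pmod{I_V}$, with every summand in $I_V$ by the sublemma. For the inductive step $m\ge 2$, write $Y=\{v^{(k_1)}, W\}$ where $W$ is the $(m-1)$-fold inner bracket and set $A := \{v, v^{(k_1)}\}$. Jacobi gives $\{v,Y\} = \{v^{(k_1)},\{v, W\}\} + \{A, W\}$. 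From $\{v,W\}\cdot v\in I_V$ (inductive hypothesis at depth $m-1$), applying $\{v^{(k_1)},\cdot\}$ and Leibniz yields $\{v^{(k_1)},\{v,W\}\}\cdot v \equiv \{v,W\}\cdot A \pmod{I_V}$; from $A\cdot v\in I_V$ (the $i=0$, $j=k_1$ case of the sublemma), Leibniz similarly gives $\{A, W\}\cdot v \equiv -A\cdot\{v,W\}\pmod{I_V}$. Summing and using commutativity of the commutative product, $\{v,W\}\cdot A - A\cdot\{v,W\} = 0$, which closes the induction.

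The principal obstacle is the sublemma: at each level $i+j$, applying $\{v^{(i)},\cdot\}$ to $d^n(v\cdot v')$ yields only one scalar equation in several unknowns $v^{(a)}\{v,v^{(b)}\}$ with $a+b=i+j$, so one must combine these equations with $d$-images of $I_V$-elements already established at lower levels, running a careful joint induction on both indices. Once the sublemma is in place, the main induction on $m$ closes by the clean cancellation above.
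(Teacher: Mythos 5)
Your inductive step on $m$ is sound and is essentially the paper's own: both arguments expand $\{v,\{v^{(k_1)},W\}\}$ by the Jacobi identity and use the Leibniz rule together with the depth-one case to make the two resulting terms cancel. The genuine gap is in the base case, which you delegate entirely to the sublemma ``$v^{(i)}\cdot\{v,v^{(j)}\}\in I_V$ for all $i\ge 0$, $j\ge 1$'' --- and that sublemma is false. Take $i=1$, $j=2$. In multidegree $3$ (three letters $v^{(a)}$) and weight $3$, the part of $I_V$ lying in the span of the products $v^{(c)}\cdot\{v^{(a)},v^{(b)}\}$ is spanned by exactly three elements: $\{v,d^2(vv')\}=3v''\{v,v'\}+3v'\{v,v''\}+v\{v,v'''\}$, $\{v',d(vv')\}=-v''\{v,v'\}+v\{v',v''\}$, and $\{v'',vv'\}=-v'\{v,v''\}-v\{v',v''\}$. (Products $v^{(c)}\cdot d^n(vv')$ land in the purely symmetric component, iterated brackets or products with two extra generators have multidegree $\ge 4$, and applying $d$ yields nothing new because $I_V$ is the Poisson ideal generated by the already $d$-closed family $\{d^n(vv')\}_{n\ge0}$.) A direct check in the four-dimensional space spanned by $v''\{v,v'\}$, $v'\{v,v''\}$, $v\{v,v'''\}$, $v\{v',v''\}$ shows that $v'\{v,v''\}$ is \emph{not} in this three-dimensional span; only the symmetric combination $v'\{v,v''\}+v''\{v,v'\}=\{v,v'v''\}$ is. So the assertion that every individual summand $v^{(i)}\{v,v^{(k_1-i)}\}$ lies in $I_V$ fails.

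Nor can the base case be patched by retreating to the symmetric combinations: collecting your two sums gives $\sum_{i=1}^{k-1}\binom{k}{i}v^{(i)}\{v,v^{(k-i)}\}$, which equals $\{v,d^{k-1}(vv')\}-v\{v,v^{(k)}\}$ and is therefore congruent modulo $I_V$ to $-\{v,v^{(k)}\}\cdot v$ --- precisely the statement being proved, so the rewrite is circular (and for $k=4$ the individual symmetric pieces such as $\{v,v'v'''\}$ are likewise not in $I_V$). You correctly flag the sublemma as ``the principal obstacle,'' but the joint induction you sketch cannot close it: the $d$-images of lower-level elements are again linear combinations of the relations $\{v^{(a)},d^n(vv')\}$ and contribute no new equations. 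The missing idea, which is the actual content of the paper's proof, is to form the single weighted combination $\sum_{m=0}^{k}\binom{k+1}{m}\{v^{(m)},d^{k-m}(vv')\}=0$: the coefficient $\binom{k+1}{m}\binom{k-m+1}{i}$ of $\{v^{(m)},v^{(i)}\}\cdot v^{(k+1-i-m)}$ is symmetric in $(i,m)$, so skew-symmetry of the bracket annihilates every diagonal $i+m=l$ and isolates the single boundary term $\{v,v^{(k+1)}\}\cdot v$.
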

\begin{proof}
By induction on $m$.
For $m = 0$, let us compute some compositions in $P_d(V)$
following the ideas of \cite{BokCZ17}. 
First, since $I_V$ is a 
differential ideal, we have  
$\{v^{(m)}, (v\cdot v')^{(n)} \}=0$, for all $m,n \ge 0$.
Apply the Leibniz rule to obtain 
\begin{equation} \label{comp}
0 = \sum_{i \ge 0}\binom{n}{i} \{v^{(m)},~v^{(i)} \cdot v^{(n + 1 - i)}\} = 
\sum_{i \ge 0}\binom{n+1}{i} \{v^{(m)},~v^{(i)}\} \cdot v^{(n + 1 - i)}, \quad
n,~m \ge 0.
\end{equation}
Choose some $k \ge 0$,
multiply \eqref{comp} by $\binom{k+1}{m}$
 and make summation over $m = \overline{0,k}$ with 
 $n = k - m $:
\begin{equation*}
0 = \sum_{m = 0}^{k} \binom{k+1}{m} \sum_{i = 0}^{k-m+1} \binom{k-m+1}{i}
\{v^{(m)},~v^{(i)}\} \cdot v^{(k + 1 - i - m)}.
\end{equation*}
Let us change the order of summation. 
Consider the diagonal $i + m = l$, 
$l = \overline{0,k+1}$. 
Then the last equation turns into
\begin{equation} \label{comp1}
0 = \sum_{l = 0}^{k+1} \sum_{\substack{i, m = 0 \\ i+m = l}}^{k} \binom{k+1}{m}
\binom{k-m+1}{i} \{v^{(m)},~v^{(i)}\} \cdot v^{(k + 1 - l)} 
+ \{v,~v^{(k+1)}\}\cdot v,
\end{equation}
where the product of the binomial coefficients, 
$$
\binom{k+1}{m}\binom{k-m+1}{i}
= \frac{(k+1)!(k+1-m)!}{m!(k+1-m)!i!(k+1-m-i)!} =
\frac{(k+1)!}{m!i!(k+1-m-i)!},
$$ 
is symmetric on the indices $i$ and $m$. 
Hence, having fixed some $l$, the respective
layer in the sum in \eqref{comp1} vanishes due to skew-symmetry of the 
bracket. Therefore, we obtain $\{v,~v^{(k+1)} \}\cdot v=0$ as demanded.

Now, let the statement be true for $m - 1$. Denote 
$\{v^{(k_1)},~\{ \dots \{v^{(k_{m-2})},~v^{(k_{m-1})}\}\dots \} = u$, hence,
$\{v,~u\}\cdot v = 0$ by assumption. Consider $\{v^{(p)},~\{v,~u\}\cdot v\}=0$,
for $p \ge 0$, and apply the Leibniz rule:
\begin{equation*}
0 = \{v^{(p)},~\{v,~u\}\cdot v\} = \{v^{(p)},~\{v,~u\}\} \cdot v + 
\{v,~u\} \cdot \{v^{(p)},~v\}.
\end{equation*}
Modulo the Jacobi identity and skew-symmetry, the equation becomes:
\begin{multline*}
0 = \{\{v^{(p)},~v\},~u\} \cdot v + \{v,~\{v^{(p)},~u\}\} \cdot v + 
\{u,~v\} \cdot \{v,~v^{(p)}\} 
\\
= \{u,~\{v,~v^{(p)}\}\} \cdot v + \{u,~v\} \cdot \{v,~v^{(p)}\} + 
\{v,~\{v^{(p)},~u\}\} \cdot v 
\\
= \{u,~\{v,~v^{(p)}\} \cdot v\} + \{v,~\{v^{(p)},~u\}\} \cdot v,
\end{multline*}
where the first term is already proven to be naught. Hereby,
$$
\{v,~\{v^{(p)},~u\}\} \cdot v = 0, \quad \text{for all }p \ge 0,
$$
and we are done.
\end{proof}

The Leibniz rule immediately implies 

\begin{corollary}\label{cor:final}
For every $f\in P_d(V)$ we have 
$\{v, f\}\cdot v = 0$.
\end{corollary}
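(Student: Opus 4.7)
The plan is to bootstrap the general claim from the right-nested brackets handled by Lemma~\ref{lem:PoisRel} using the Leibniz rule. Let
$$
J = \{f \in P_d(V) \mid \{v, f\} \cdot v = 0\}.
$$
First I would check that $J$ is a $\Bbbk$-subspace closed under the associative commutative product: for $f_1, f_2 \in J$, the Leibniz rule together with commutativity gives
$$
\{v, f_1 f_2\} \cdot v = \{v, f_1\} \cdot v \cdot f_2 + f_1 \cdot \{v, f_2\} \cdot v = 0.
$$
Hence, to establish $J = P_d(V)$, it suffices to exhibit a commutative-algebra generating set for $P_d(V)$ contained in $J$.

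Next I would invoke the standard description of a free Poisson algebra as, commutatively, the symmetric algebra on its underlying free Lie algebra: $\Pois\langle v, v', v'', \dots \rangle$, and therefore its quotient $P_d(V)$, is spanned as a commutative algebra by Lie monomials in the generators $v^{(k)}$. By repeated application of skew-symmetry and the Jacobi identity, every such Lie monomial is a $\Bbbk$-linear combination of the generators $v^{(k)}$ themselves and right-nested brackets
$$
\{v^{(k_1)}, \{v^{(k_2)}, \dots, \{v^{(k_{m-1})}, v^{(k_m)}\}\dots\}\}, \quad k_1,\dots,k_m\ge 0.
$$
Each $v^{(k)}$ belongs to $J$ by the base case of Lemma~\ref{lem:PoisRel} (together with the trivial case $\{v,v\}=0$ coming from skew-symmetry), and each right-nested bracket belongs to $J$ by the inductive conclusion of the same lemma. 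Combining this with the first paragraph gives $J = P_d(V)$.

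The only mildly delicate point is the reduction from arbitrary Lie monomials to right-nested ones, but this is the classical fact that right-normed Lie words span the free Lie algebra, so no real obstacle arises. In effect, the corollary is simply the ``Leibniz closure'' of Lemma~\ref{lem:PoisRel}.
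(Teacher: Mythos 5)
Your proposal is correct and is essentially the paper's own argument: the paper's entire proof is the single line ``The Leibniz rule immediately implies,'' and your write-up (multiplicative closure of the set $J$ via the Leibniz rule, plus the fact that $P_d(V)$ is generated as a commutative algebra by right-nested Lie monomials in the $v^{(k)}$, all of which lie in $J$ by Lemma~\ref{lem:PoisRel} and skew-symmetry) is precisely the careful expansion of that line. No gaps; the reduction to right-normed Lie words and the treatment of the degenerate cases $\{v,v\}=0$ and $\{v,1\}=0$ are exactly the points worth making explicit.
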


The generator $\varphi(v)$ of $A(V)$ is given by the conformal endomorphism
$$
\varphi(v) = \tau(v) = 1 \otimes \ad_v + (x - T) \otimes L_{v'} - 
T \otimes L_v (d + \id)
$$
of the free $H$-module $H\otimes P_d(V)$.
Consider the element
$u=v \ooo{0} (v \ooo{2} v) \in \mathcal{U}(L(V);~3)$
from its $H$-linear basis.
Then $\varphi (u) \in A(V)$ may be calculated as follows,
using \eqref{Lad} -- \eqref{dL}:
evaluate $\tau(v) \ooo{\mu} (\tau(v) \ooo{\lambda} \tau(v))$
and choose the coefficient at $\mu^0
\frac{\lambda^2}{2}$.
As a result, we obtain
\begin{multline*}
\varphi(u) = \tau(v) \ooo{0} (\tau(v) \ooo{2} \tau(v)) 
= (\ad_v + x L_{v'})
(-2 L_v L_v (d + \id)^2) \\
= -2 \ad_v L_v L_v (d + \id)^2 
- 2 x L_{v'} L_v L_v (d + \id)^2,
\end{multline*}
where the second term is zero in $P_d(V)$ due to the multiplication table. 
Recall that action of $A(V)$ on 
$H\otimes P_d(V)$ is faithful and consider, applying the Leibniz rule,
\begin{multline} \label{image}
\tau(v) \ooo{0} (\tau(v) \ooo{2} \tau(v))_{\mu}f = 
-2 \ad_v L_v L_v (d + \id)^2 f \\
= -2 \{v,~v^2 (d + \id)^2f\} = -2 \{v,~f'' + 2 f' + f\} \cdot v^2,
\end{multline}
for every $f \in P_d(V)$. 
By corollary \ref{cor:final},
\eqref{conseq} annihilate the RHS of \eqref{image}. Thus, we are finished.
\end{example}

\end{document}